\newcommand{\ZZ}{{\mathbb Z}}
\DeclareMathOperator{\Hom}{Hom}
\newcommand{\Aut}{{\rm Aut}}
\newcommand{\supp}{{\rm supp}}
\newcommand{\Div}{{\rm Div}}
\newcommand{\Prin}{{\rm Prin}}
\title{Galois points for a finite graph}
\subjclass[2020]{Primary 05C10; Secondary 05C60, 14H99}
\keywords{graph, linear system, Galois point, algebraic curve, complete graph}
\author[Fukasawa]{Satoru Fukasawa*}
\thanks{*Corresponding author}
\address{Faculty of Science, Yamagata University, Kojirakawa-machi 1-4-12, Yamagata 990-8560, Japan} 
\email{s.fukasawa@sci.kj.yamagata-u.ac.jp} 
\author[Miezaki]{Tsuyoshi Miezaki}
\address{Faculty of Science and Engineering, Waseda University, Ohkubo 3-4-1, Shinjuku, Tokyo 169-8555, Japan} 
\email{miezaki@waseda.jp}
\newtheorem{theorem}{Theorem}[section]
\newtheorem{proposition}[theorem]{Proposition}
\newtheorem{corollary}[theorem]{Corollary}
\newtheorem{lemma}[theorem]{Lemma}
\theoremstyle{definition}
\newtheorem{definition}[theorem]{Definition} 
\newtheorem{remark}[theorem]{Remark}
\begin{document}
\begin{abstract} 
This paper introduces the notion of a Galois point for a finite graph, using the theory of linear systems of divisors for graphs discovered by Baker and Norine.   
We present a new characterization of complete graphs in terms of Galois points. 
\end{abstract}

\maketitle 

\section{Introduction} 
All graphs in this paper are finite, undirected, and simple.
Baker and Norine \cite{baker-norine} introduced linear systems of divisors for finite graphs as an analogue of linear systems of divisors for algebraic curves, to prove the Riemann--Roch theorem for graphs. 
The theory of linear systems for graphs has been developed by several authors (for example, see \cite{baker-norine, baker-norine2, corry, corry2, corry-perkinson}).  
In algebraic geometry, Yoshihara introduced the notion of a Galois point for a plane curve: a smooth point $P$ of a plane curve $C$ is called a Galois point if the covering map $\hat{C} \rightarrow \mathbb{P}^1$ induced by the projection from $P$ is a Galois covering, where $\hat{C}$ is the smooth model of $C$.   
Several classification results of algebraic varieties have been obtained, according to the arrangement of Galois points
(for example, see \cite{fukasawa, miura-yoshihara, yoshihara, open}). 

One purpose of this paper is to introduce the notion of a {\it Galois point for a finite graph}. 
To define a Galois point, the notion of a Galois covering is required. 
The notion of a harmonic group action was introduced by Corry (see \cite[Definition 2.4]{corry}, \cite[Definition 2.5]{corry2}). 
Using this notion and referring to \cite[Chapter IV, Proposition 3.1]{hartshorne} and \cite[Theorem 3.7.1, Corollary 3.7.2, Theorem 3.8.2]{stichtenoth}, we present the following definition of a Galois point. 

\begin{definition} \label{Galois point}
Let $G$ be a 
2-edge-connected graph, 
and let $D$ be a divisor with $r(D)=2$. 
A vertex $P \in V(G)$ is called a {\it Galois point} with respect to $|D|$ if the following three conditions are satisfied: 
\begin{itemize}
\item[(1)] $r(D-P)=1$, 
\item[(2)] for any vertex $Q \in V(G)$ (including the case $Q=P$), $r(D-P-Q)=0$,
\item[(3)] there exist a subgroup $H \subset {\rm Aut}(G)$ 
with $|H|=\deg (D)-1$ and different effective divisors $E_1,E_2 \in |D-P|$ such that
\begin{enumerate}
\item 
$|V(G/H)|>1$, 
\item 
$H$ acts harmonically on $G$, 
\item
$\sigma(E_1)=E_1, \ \sigma(E_2)=E_2$ 
for any $\sigma \in H$. 
\end{enumerate}
\end{itemize} 
\end{definition}

\begin{remark}
The conditions in Definition \ref{Galois point} are explained in algebraic geometry, for a smooth projective curve $C$ with a divisor $D$, as follows.  
The assumption $r(D)=2$ implies that the linear system $|D|$ induces a rational map $\varphi_{|D|}: C \dashrightarrow \mathbb{P}^2$. 
Condition (1) implies that $P$ is not a base point of $|D|$, namely, $\varphi_{|D|}$ is defined at $P$, and the linear system $|D-P|$ induces a rational map $\varphi_{|D-P|}: C \dashrightarrow \mathbb{P}^1$.
Condition (3) implies that $\varphi_{|D-P|}$ coincides with the quotient map $C \rightarrow C/H$, namely, $|D-P|$ is base-point-free.  
In conclusion, $\deg \varphi_{|D|}(C)=\deg D$, $\varphi_{|D|}(P)$ is a smooth point of a plane curve $\varphi_{|D|}(C)$, the projection from $\varphi_{|D|}(P)$ is identified with $\varphi_{|D-P|}$, and $\varphi_{|D|}(P)$ is a Galois point of $\varphi_{|D|}(C)$. 

In algebraic geometry, condition (2) is not required for the definition of a Galois point, since it can be derived from conditions (1) and (3). 
Conditions (1) and (2) suggest a definition of a {\it smooth point} of a graph as a plane curve. 
\end{remark}

Another purpose of this paper is to use the notion of a Galois point to characterize complete graphs. 
The following is the primary theorem around which this paper is centered. 

\begin{theorem} \label{graph}
Let $G$ be a 
2-edge-connected graph, 
and let $D=P_1+\cdots+P_n$, 
where $\{P_1, \ldots, P_n\}=V(G)$. 
Then the following conditions are equivalent: 
\begin{itemize}
\item[(1)] the graph $G$ coincides with the complete graph $K_n$, 
\item[(2)] $r(D)=2$, and there exist $i, j$ with $i \ne j$ such that vertices $P_i, P_j$ are Galois points with respect to $|D|$. 
\end{itemize} 
In this case, all vertices $P_1, \ldots, P_n \in V(G)$ are Galois points. 
\end{theorem}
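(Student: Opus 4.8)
\subsection*{Proof proposal}

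The plan is to prove the two implications separately; throughout I write $n=\deg D=|V(G)|$ and use repeatedly that $D=\sum_{v\in V(G)}v$ is invariant under every graph automorphism.

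For $(1)\Rightarrow(2)$ and the final assertion, assume $G=K_n$. The engine is the linear equivalence $D\sim nP$ for every vertex $P$, obtained by firing the set $V(G)\setminus\{P\}$ (each vertex outside $P$ sends its single chip to $P$, which then carries all $n$ chips). From this, all the rank computations reduce to one reduced-divisor calculation: for distinct vertices $A,C$ the divisor $(n-2)A-C$ is $C$-reduced with coefficient $-1$ at $C$, so $D-2A-C\sim(n-2)A-C$ is not equivalent to an effective divisor. This gives $r(D)\le 2$ at once (take the effective degree-$3$ divisor $2A+C$), while $r(D)\ge 2$ is clear, since $D-E$ is equivalent to an effective divisor for every effective $E$ of degree $2$ (using $D\sim nP$ in the concentrated case $E=2P$); hence $r(D)=2$. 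The same calculation yields $r(D-P)=1$ and $r(D-P-Q)=0$ for all $Q$, verifying conditions $(1)$ and $(2)$ of the definition for \emph{every} vertex $P$. For condition $(3)$ I take $H_P\le\Aut(K_n)=S_n$ to be a cyclic subgroup of order $n-1$ fixing $P$, check that it acts harmonically with $|V(K_n/H_P)|=2$, and use $E_1=D-P$ together with $E_2=(n-1)P$: both are $H_P$-invariant because $P$ is fixed, both lie in $|D-P|$ because $(n-1)P\sim D-P$, and they are distinct. Thus every vertex of $K_n$ is a Galois point, which establishes $(1)\Rightarrow(2)$ and the final statement simultaneously.

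For $(2)\Rightarrow(1)$ — the substantive direction — let $P$ be one of the Galois points, with associated $H_P$ of order $n-1$ and $H_P$-invariant $E_1\in|D-P|$. The key observation is that for each $\sigma\in H_P$ we have $E_1=\sigma(E_1)\sim\sigma(D-P)=D-\sigma(P)$, since $D$ is automorphism-invariant; combined with $E_1\sim D-P$ this gives $\sigma(P)\sim P$. Now I invoke $2$-edge-connectedness through the well-known fact (see \cite{baker-norine}) that on a $2$-edge-connected graph the Abel--Jacobi map is injective, i.e.\ distinct vertices are never linearly equivalent. Hence $\sigma(P)=P$ for all $\sigma$, so $H_P$ fixes $P$. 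Harmonicity then finishes the local analysis: the multiplicity of the quotient morphism $G\to G/H_P$ at the fixed vertex $P$ equals $|H_P|=n-1$, so $\deg_G P$ is a positive multiple of $n-1$; as $G$ is simple this forces $\deg_G P=n-1$, with all neighbours of $P$ lying in a single $H_P$-orbit of size $n-1$. In particular $\Aut(G)$ acts transitively on $V(G)\setminus\{P\}$.

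Finally I use that there are \emph{two} Galois points $P_i,P_j$: the previous paragraph makes $\Aut(G)$ transitive on $V(G)\setminus\{P_i\}$ and on $V(G)\setminus\{P_j\}$, and since these two sets overlap (as $n\ge 3$) and cover $V(G)$, the action is transitive on all of $V(G)$. A vertex-transitive graph is regular, so every vertex has degree $\deg_G P_i=n-1$; a simple graph on $n$ vertices all of degree $n-1$ is $K_n$, completing $(2)\Rightarrow(1)$.

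I expect the main obstacle to be the implication $(2)\Rightarrow(1)$, and within it the two linchpins: first, converting the invariance relation $\sigma(P)\sim P$ into the equality $\sigma(P)=P$ via the $2$-edge-connected hypothesis (this is precisely where that hypothesis enters); and second, translating harmonicity of $H_P$ at the now-fixed vertex $P$ into the numerical statement $\deg_G P=n-1$. A secondary technical point, on the $(1)\Rightarrow(2)$ side, is to verify that the chosen order-$(n-1)$ subgroup of $S_n$ fixing $P$ acts harmonically in the precise sense of Corry \cite{corry,corry2}; the two invariant divisors $(n-1)P$ and $D-P$ are available for any such subgroup, so the only thing that requires care is the harmonicity condition itself.
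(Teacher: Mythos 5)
Your proposal is correct and follows essentially the same route as the paper: invariance of $E_1$ gives $\sigma(P)\sim P$, injectivity of the Abel--Jacobi map on a $2$-edge-connected graph (the paper's Lemma \ref{lem:pq}) upgrades this to $\sigma(P)=P$ (the paper's Lemma \ref{lem:D-P_1}), and the free action of the order-$(n-1)$ group on the edges at the fixed vertex $P$ forces $\deg_G P=n-1$ and transitivity on $V(G)\setminus\{P\}$. The only cosmetic differences are that the paper finishes via double transitivity of $\langle H_1,H_2\rangle$ rather than transitivity-plus-regularity, and proves $|(n-2)P_1-P_2|=\emptyset$ by a direct Laplacian estimate rather than your reduced-divisor criterion; do make explicit that your order-$(n-1)$ cyclic subgroup is generated by an $(n-1)$-cycle on $V(G)\setminus\{P\}$, since an arbitrary cyclic subgroup of that order fixing $P$ need not act harmonically.
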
 

This paper is organized as follows. 
In Section~\ref{sec:pre}, 
we define, and give some basic properties of, 
linear systems and harmonic actions on the graphs used in this paper.
In Section~\ref{sec:ex}, 
we provide examples of Galois points of certain graphs. 
In Section~\ref{sec:proof}, 
we prove Theorem~\ref{graph}.
Finally, in Section~\ref{sec:rem}, 
we provide concluding remarks about future work. 


\section{Preliminaries}\label{sec:pre}

\subsection{Divisors on finite graphs}

In this paper, we always assume that 
a graph $G$ is finite, undirected, 
and connected without loops or multiple edges. 
The set of vertices (resp.~edges) is denoted by $V(G)$ (resp.~by $E(G)$). 
For $e=\{P,Q\} \in E(G)$, we write $\overline{PQ}$. 
A {\it divisor} on a graph $G$ is an element of the free abelian group on $V(G)$: 
\[
\Div(G)=\left.\left\{\sum_{P\in V}a_P P\ \right\vert\ a_P\in \ZZ\right\}. 
\]
For $D=\sum_{P\in V(G)} a_P P\in \Div(G)$, 
let 
\[\deg(D):=\sum_{P\in V(G)} a_P,\ D(P):=a_P,\] 
\[
\supp(D):=\{P\in V\mid D(P)\neq 0\}, 
\]
and 
\[
\Div^k(G):=\{D\in \Div(G)\mid \deg(D)=k\}. 
\]
For $D,D'\in \Div(G)$, 
$D \geq D'$ if and only if 
$D(P) \geq D'(P)$ for all $P\in V(G)$. 
A divisor $D \in \Div(G)$ is called {\it effective} if $D \geq 0$. 

Let $M(G) = \Hom(V(G),\ZZ)$ be 
the set of all integer-valued functions 
on $V(G)$. 
The {\it Laplacian operator} $\Delta: M(G)\rightarrow \Div(G)$ is defined as follows. 
For $f\in M(G)$, 
\[
\Delta(f):=\sum_{P\in V(G)}\sum_{\overline{PQ}\in E(G)}(f(P)-f(Q))P. 
\]
We note that the {\it principal divisor} 
$\Prin(G):=\Delta(M(G))$ is a subgroup of $\Div^0(G)$. 
For $D,D'\in \Div(G)$, $D\sim D'$ if and only if 
\[
D-D'\in \Prin(G). 
\]
We call this relation a {\it linear equivalence}. 
Let $D$ be a divisor. 
Then the {\it (complete) linear system} $|D|$ is the set of all effective divisors linearly equivalent to $D$, 
\[
|D|:=\{ E \in  \Div(G)\mid E\geq  0, E \sim D \}. 
\]
The smallest integer $s$ such that $|D-E| \ne \emptyset$ for any effective divisor $E$ with $\deg (E)=s$ is denoted by $r(D)$. 
If $|D|=\emptyset$, then we set $r(D)=-1$ and 
call $r(D)$ the {\it rank} of $D$.

\subsection{Harmonic maps and actions of graphs}
The concept of harmonic maps was introduced in 
\cite{{baker-norine2},{urakawa1},{urakawa2}}. 

Let $G$, $G'$ be graphs. 
We say a function 
$\phi: V(G) \cup E(G) \rightarrow V(G')\cup  E(G')$ 
($\phi:G\rightarrow G'$ for short) 
is a {\it morphism} from $G$ to $G'$ 
if $\phi(V(G)) \subset V(G')$, and 
for any edge $e \in E(G)$ with endpoints $P$ and $Q$, 
one of the following holds: 
\begin{enumerate}
\item 
$\phi(e) \in E(G')$, and $\phi(P)$, $\phi(Q)$ 
are the endpoints of $\phi(e)$, 
\item 
$\phi(e) \in V(G')$, and $\phi(e) = \phi(P) = \phi(Q)$. 
\end{enumerate}
If $\phi(E(G)) \subset E(G')$, then we call $\phi$ a {\it homomorphism}, and 
a bijective homomorphism is called an {\it isomorphism}. 
An isomorphism from $G$ to $G$ is called an {\it automorphism}. 
We denote by $\Aut(G)$ the set of all automorphisms of $G$. 


A morphism $\phi: G \rightarrow G'$ is {\it harmonic} 
if for all $P \in V(G), Q \in V(G')$ with $Q = \phi(P)$ 
\[
|\{e \in E(G) \mid P \in e, \phi(e) = e'\}|
\]
is independent of the choice of $e' \in E(G')$, with $Q \in e'$.

A harmonic group action on a graph was defined in \cite{corry}. 
Let $\Gamma$ be a set of automorphisms of $G$. 
Then we define a quotient graph $G/\Gamma$ and a map 
\[
\phi_\Gamma: G\rightarrow G/\Gamma
\]
as
\[
\begin{cases}
V(G/\Gamma)&=V(G)/\Gamma,\\
E(G/\Gamma)&=E(G)/\Gamma\setminus \{\Gamma e\mid 
\mbox{$e$ has endpoints $P,Q$ and $\Gamma P=\Gamma Q$}\},\\
\phi_{\Gamma}(P)&=\Gamma P \mbox{ for all $P\in V(G)$}, 
\end{cases}
\]
and if $e\in E(G)$ with vertices $P$ and $Q$, 
then 
\[
\phi_{\Gamma}(e)=
\begin{cases}
\Gamma e \mbox{ if $\Gamma P\neq \Gamma Q$},\\ 
\phi_{\Gamma}(P)  \mbox{ if $\Gamma P= \Gamma Q$}. 
\end{cases}
\]
Let $\Gamma$ be a subgroup of $\Aut(G)$. 
Then $\Gamma$ acts {\it harmonically} on $G$ 
if for all subgroups $\Delta <\Gamma$ 
the quotient morphism 
$\phi_{\Delta} :G \rightarrow G/\Delta$ 
is harmonic. 

The following proposition is useful for checking whether $\Gamma$ is a harmonic action or not. 
\begin{proposition}[{\cite[Proposition 2.5]{corry},\cite[Proposition 2.7]{corry2}}]\label{prop:harm}
Suppose $\Gamma <\Aut(G)$ is a group of automorphisms of a graph $G$. 
Then $\Gamma$ acts {\it harmonically} if and only if 
for every vertex $P\in V(G)$ 
the stabilizer subgroup $\Gamma_P$ acts freely on 
$E(P(1))$, where 
$P(1)$ is the induced subgraph of $G$ on $P$. 
\end{proposition}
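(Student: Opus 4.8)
The plan is to reduce the harmonicity of each quotient morphism $\phi_\Delta$ to a statement about the sizes of the orbits of the stabilizer $\Delta_P$ acting on the edges incident to $P$, and then to read off both implications. For a vertex $P$ write $E(P)$ for the set of edges of $G$ incident to $P$ (this is the edge set $E(P(1))$ of the star of $P$); since each element of $\Delta_P$ fixes $P$, the group $\Delta_P$ permutes $E(P)$. The crux is a local computation of the horizontal multiplicity of $\phi_\Delta$ at $P$. Fix $\Delta<\Gamma$ and set $Q=\phi_\Delta(P)=\Delta P$. Every edge $e'$ of $G/\Delta$ incident to $Q$ has a representative $\overline{PR}$ with $R\notin \Delta P$ (a non-contracted edge at $P$), because one endpoint of any representative lies in the orbit $\Delta P$ and can be carried to $P$ by an element of $\Delta$. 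I would then show that the edges of $E(P)$ mapping to $e'=\Delta\overline{PR}$ are exactly the members of the $\Delta_P$-orbit of $\overline{PR}$: an edge $\delta\overline{PR}$ is incident to $P$ iff $\delta P=P$ or $\delta R=P$, and the latter is impossible because $\Delta R\neq \Delta P$, so $\delta\in\Delta_P$ and $\delta\overline{PR}=\overline{P,\delta R}$. Hence $|\{e\in E(P): \phi_\Delta(e)=e'\}|$ equals the cardinality of the $\Delta_P$-orbit of $\overline{PR}$. By the same argument, two edges of $E(P)$ lie in the same class at $Q$ exactly when they lie in the same $\Delta_P$-orbit, so the edges $e'$ at $Q$ correspond bijectively to the $\Delta_P$-orbits of non-contracted edges in $E(P)$. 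Therefore $\phi_\Delta$ is harmonic at $P$ if and only if all such orbits have the same cardinality; call this the local criterion.

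For the implication that free stabilizer actions force harmonicity, assume $\Gamma_P$ acts freely on $E(P)$ for every $P$. For any $\Delta<\Gamma$ the subgroup $\Delta_P=\Delta\cap\Gamma_P$ also acts freely on $E(P)$, so every $\Delta_P$-orbit has cardinality $|\Delta_P|$; in particular all orbits have equal size, and by the local criterion $\phi_\Delta$ is harmonic at every vertex. As this holds for all $\Delta<\Gamma$, the action of $\Gamma$ is harmonic.

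For the converse I would argue by contradiction, using connectivity of $G$. Suppose $\Gamma$ acts harmonically but some non-identity $\sigma\in\Gamma_P$ fixes an edge $\overline{PR}\in E(P)$. Apply the local criterion to $\Delta=\langle\sigma\rangle$: here $\Delta P=\{P\}$, so no edge at $P$ is contracted and $\Delta_P=\langle\sigma\rangle$. Since the orbit of the fixed edge $\overline{PR}$ is a singleton, harmonicity of $\phi_{\langle\sigma\rangle}$ at $P$ forces every orbit to be a singleton, i.e. $\sigma$ fixes every edge of $E(P)$ and hence every neighbor of $P$. The same reasoning applies verbatim at each neighbor $w$ of $P$, because $\sigma$ fixes $w$ and fixes the edge $\overline{Pw}\in E(w)$; thus $\sigma$ fixes all edges and all neighbors of $w$ as well. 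Propagating along paths and using that $G$ is connected, $\sigma$ fixes every vertex and every edge of $G$, so $\sigma=1$, a contradiction. Hence every $\Gamma_P$ acts freely on $E(P)$.

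I expect the main obstacle to be the first step, namely identifying the horizontal multiplicity with an orbit size: one must correctly describe the edge classes of $G/\Delta$ incident to $Q$, pick representatives at $P$, and exclude the contracted edges. Once the local criterion is established, both implications are short, with the connectivity-based propagation supplying the key idea for the converse.
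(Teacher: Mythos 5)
The paper does not prove this proposition at all --- it is imported verbatim from Corry with a citation --- so there is no internal proof to compare against; judged on its own, your argument is correct and complete, and it is essentially the standard (Corry's) proof: identify the horizontal multiplicity of $\phi_\Delta$ at $P$ over a non-contracted edge class with the size of the corresponding $\Delta_P$-orbit in the star of $P$, deduce harmonicity from equal orbit sizes when the stabilizers act freely, and for the converse take $\Delta=\langle\sigma\rangle$ for a $\sigma$ fixing an incident edge and propagate the resulting singleton-orbit condition along paths to force $\sigma=1$ by connectivity. One point worth making explicit: your reading of $E(P(1))$ as the set of edges incident to $P$ is the right one and the one the paper actually uses (e.g.\ ``$H$ acts on the set of edges with end point $P_1$ freely''); under the literal reading ``edges of the subgraph induced on the closed neighborhood of $P$'' the statement would be false --- for the cyclic group $H=\langle\sigma\rangle$ of order $4$ acting harmonically on $K_5$ as in Proposition \ref{lemma of Galois complete}, the element $\sigma^2\in H=H_{P_1}$ fixes the edge $\overline{P_2P_4}$, which lies in that induced subgraph but is not incident to $P_1$.
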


\subsection{$q$-reduced divisors}

In this subsection, we recall the concept of $q$-reduced divisors 
on finite graphs. 
We quote some results from \cite{corry-perkinson}. 

Let $D\in \Div(G)$ and $q\in V$. 
We say $D$ is {\it $q$-reduced} if the following two conditions are satisfied: 
\begin{enumerate}
\item 
$D(P)\geq 0$ for all $P\in V\setminus \{q\}$, 
\item
for all non-empty sets $S\subset V\setminus \{q\}$, 
there exists $P\in S$ such that 
\[
D(P)<\mbox{outdeg}_S(P), 
\]
where $\mbox{outdeg}_S(P)$ is the number of edges of $\overline{PQ}$ 
with $Q\not\in S$. 
\end{enumerate}
It is known that for $D\in \Div(G)$ and $q\in V$ 
there exists a unique $q$-reduced divisor $\widetilde{D}$ 
linearly equivalent to $D$ 
\cite[Theorem 3.7]{corry-perkinson}. 

We will now state and prove two technical 
lemmas that will be used later. 
\begin{lemma}\label{lem:pq}
Let $G$ be a 2-edge-connected graph, and 
for $P\in V(G)$, 
let $D=P$ be a divisor on $G$. 
Then there does not exist $Q\in V(G)\setminus \{P\}$ such that $P\sim Q$. 
\end{lemma}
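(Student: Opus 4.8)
The plan is to argue by contradiction using the theory of $q$-reduced divisors. Suppose there exists $Q \in V(G) \setminus \{P\}$ with $P \sim Q$. I would take $q = Q$ and consider the $Q$-reduced divisor $\widetilde{D}$ linearly equivalent to $D = P$. On the one hand, since $Q \sim P = D$, the divisor $Q$ itself is effective and linearly equivalent to $D$, and I claim $Q$ is already $Q$-reduced: condition (i) holds because $Q(R) = 0 \ge 0$ for every $R \ne Q$, and condition (ii) holds vacuously-to-check because for any nonempty $S \subseteq V(G) \setminus \{Q\}$ every vertex $R \in S$ satisfies $Q(R) = 0$, so I only need some $R \in S$ with $\mbox{outdeg}_S(R) > 0$. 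This is where $2$-edge-connectedness enters: since $G$ is connected and $S \ne V(G)$, there is at least one edge leaving $S$, so some vertex of $S$ has positive outdegree relative to $S$. Hence $Q$ is the unique $Q$-reduced divisor in its class.

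On the other hand, $D = P$ is also effective and $P \sim Q$, so $P$ lies in the same linear class. I would then show $P$ is \emph{not} $Q$-reduced (as a divisor with respect to the base vertex $Q$), which already gives a contradiction once combined with uniqueness — but more directly, the uniqueness theorem \cite[Theorem 3.7]{corry-perkinson} says there is exactly one $Q$-reduced representative, namely $Q$. Since $P \ne Q$ but $P \sim Q$, and $P$ would have to reduce to $Q$, the two effective divisors $P$ and $Q$ of degree $1$ are distinct yet linearly equivalent. The key point to extract a contradiction is that a degree-one effective divisor supported at a single vertex $R$ is automatically the $R$-reduced divisor of its own class, so both $P$ and $Q$ are $q$-reduced for their respective base vertices; comparing against the single class forces $P = Q$ through the uniqueness statement applied with a common base vertex.

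The cleanest route, which I would adopt, is: fix the base vertex $q = Q$, compute the unique $Q$-reduced divisor of the class of $D = P$, and observe it must equal $Q$ (by the argument above that $Q$ is $Q$-reduced). But $P \sim Q$ with $P$ effective of degree one also means $P$ reduces to the same divisor $Q$; since $P$ has $P(Q) = 0 < 1 = Q(Q)$-type discrepancy is impossible for two reduced representatives, uniqueness forces $P = Q$, contradicting $P \ne Q$.

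The main obstacle I anticipate is verifying condition (ii) in the definition of $Q$-reduced for the candidate divisor $Q$ — specifically, confirming that \emph{every} nonempty proper subset $S \subseteq V(G) \setminus \{Q\}$ contains a vertex with positive outdegree. This is exactly where $2$-edge-connectedness (or even plain connectedness, since $S$ is a proper subset of $V(G)$) is needed, and I would make sure the argument invokes connectivity to guarantee an edge crossing the boundary of $S$. A secondary subtlety is being careful that the uniqueness theorem is applied with a single fixed base vertex throughout, so that ``$P$ and $Q$ are both $Q$-reduced'' is a legitimate comparison; I would phrase everything relative to the base vertex $Q$ to keep this clean.
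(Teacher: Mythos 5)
Your overall strategy --- exhibit two distinct effective degree-one divisors in the same class as $q$-reduced for a common base vertex $q$ and invoke the uniqueness theorem --- is exactly the paper's strategy, but your execution has a genuine gap. With your choice $q=Q$, you correctly verify that $Q$ is $Q$-reduced (and indeed only connectedness is needed for that, since the candidate divisor vanishes on $V(G)\setminus\{Q\}$). But uniqueness then yields a contradiction only if you \emph{also} verify that $P$ is $Q$-reduced, and you never do this; instead you alternately propose to show $P$ is \emph{not} $Q$-reduced (which would prove nothing --- most divisors in a class are not reduced, and ``$P$ reduces to $Q$'' is true of every divisor in the class) and assert a ``discrepancy is impossible for two reduced representatives,'' which presupposes the very fact you have not checked. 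The missing verification is precisely where 2-edge-connectedness is indispensable: for a set $S\subset V(G)\setminus\{Q\}$ with $P\in S$ all of whose outgoing edges are incident to $P$, one needs the edge cut to have size at least $2$ so that $P(P)=1<\mbox{outdeg}_S(P)$. Your parenthetical that ``plain connectedness'' might suffice is a warning sign that this step was overlooked: the lemma is false for merely connected graphs (on a single edge $\overline{PQ}$ one has $P-Q=\Delta(\chi_P)$, so $P\sim Q$).

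Once that verification is added, your proof is correct and is essentially the paper's argument; the only cosmetic difference is that the paper chooses the base vertex $q\in V(G)\setminus\{P,Q\}$, so that the two divisors $P$ and $Q$ are checked to be $q$-reduced by one symmetric computation, whereas your choice $q=Q$ makes the two checks asymmetric (trivial for $Q$, substantive for $P$). Either choice works; what cannot be omitted is the substantive check.
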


\begin{proof}
Assume that $P\sim Q$. 
Let $q\in V(G)\setminus\{P,Q\}$. 
Then $P$ and $Q$ are $q$-reduced divisors. 
Indeed, let $S\subset V(G)\setminus \{q\}$. 
If 
there exists $R\in S\setminus\{P\}$ such that 
$\mbox{outdeg}_S(R)\geq 1$, 
then 
\[
0=D(R)<\mbox{outdeg}_S(R). 
\]
Therefore, we may assume that $\mbox{outdeg}_S(R)=0$ 
for all $R\in S\setminus\{P\}$. 
Since $G$ is 2-edge-connected, we have 
$\mbox{outdeg}_S(P)\geq 2$ and 
\[
1=D(P)<\mbox{outdeg}_S(P). 
\]
Hence, $P$ and $Q$ are $q$-reduced divisors. 
The uniqueness of the $q$-reduced divisor implies that 
$P=Q$. 
\end{proof}

\begin{lemma}\label{lem:D-P_1}
Let $G$ be a 2-edge-connected graph with 
\[
V(G)=\{P_1,\ldots,P_n\} 
\]
and 
\[
D=P_1+\cdots+P_n\in \Div(G). 
\]
Assume that 
$P_1$ is a Galois point with respect to $|D|$, and that 
$H$ is the group of order $n-1$ 
associated with $P_1$. 
Then for all $\sigma \in H$, 
\[
\sigma(P_1)=P_1,\ \sigma(D-P_1)=D-P_1. 
\]
\end{lemma}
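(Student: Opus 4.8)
The plan is to reduce the two required identities to the single statement $\sigma(P_1)=P_1$. Since $D=P_1+\cdots+P_n$ assigns coefficient one to every vertex, any $\sigma\in H\subset\Aut(G)$ simply permutes $V(G)$ and hence fixes $D$, i.e.\ $\sigma(D)=D$. Therefore $\sigma(D-P_1)=\sigma(D)-\sigma(P_1)=D-\sigma(P_1)$, so the identity $\sigma(D-P_1)=D-P_1$ is equivalent to $\sigma(P_1)=P_1$, and it suffices to establish the latter.

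To prove $\sigma(P_1)=P_1$, I would use the $H$-invariant effective divisor guaranteed by condition~(3) of Definition~\ref{Galois point}. Fix $E_1\in|D-P_1|$ with $\sigma(E_1)=E_1$ for all $\sigma\in H$; by definition $E_1\sim D-P_1$. The key auxiliary fact is that an automorphism preserves linear equivalence: a direct computation gives $\sigma(\Delta(f))=\Delta(f\circ\sigma^{-1})$, so $\sigma$ maps $\Prin(G)$ onto itself, and hence $A\sim B$ implies $\sigma(A)\sim\sigma(B)$. Applying $\sigma$ to $E_1\sim D-P_1$ then yields $E_1=\sigma(E_1)\sim\sigma(D-P_1)=D-\sigma(P_1)$. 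Combined with $E_1\sim D-P_1$, this gives $D-P_1\sim D-\sigma(P_1)$, and cancelling $D$ produces the linear equivalence of vertices $P_1\sim\sigma(P_1)$.

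Finally I would appeal to Lemma~\ref{lem:pq}. Because $G$ is 2-edge-connected, that lemma forbids any vertex distinct from $P_1$ from being linearly equivalent to $P_1$. Since $\sigma(P_1)\in V(G)$ satisfies $P_1\sim\sigma(P_1)$, we must have $\sigma(P_1)=P_1$; the identity $\sigma(D-P_1)=D-P_1$ then follows from the first paragraph.

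The argument is short, and I anticipate no real obstacle; the only point demanding care is the verification that $\sigma$ respects linear equivalence, which is precisely where the hypothesis that each element of $H$ is a genuine graph automorphism (not merely a vertex permutation) enters. It is worth noting that the remaining clauses of Definition~\ref{Galois point}---the rank conditions~(1),~(2) and the harmonicity and quotient requirements---play no role here: the entire weight of the proof rests on the existence of a single $H$-invariant divisor in $|D-P_1|$ together with Lemma~\ref{lem:pq}.
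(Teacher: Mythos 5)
Your proof is correct and follows essentially the same route as the paper's: both arguments take the $H$-invariant divisor $E_1\in|D-P_1|$ from Definition~\ref{Galois point}~(3), use $\sigma(\Delta(f))=\Delta(f\circ\sigma^{-1})$ to see that $\sigma$ preserves linear equivalence, deduce $P_1\sim\sigma(P_1)$, and conclude via Lemma~\ref{lem:pq}. The only cosmetic difference is that the paper phrases it as a proof by contradiction while you argue directly.
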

\begin{proof}
Assume that there exists $\sigma \in H$ such that $\sigma(P_1) \ne P_1$. 
Let $E \in |D-P_1|$ be an effective divisor fixed by $H$, as in Definition \ref{Galois point} (3) (iii), and let $D-P_1=\Delta(f)+E$.  
Then 
\begin{eqnarray*} 
\sigma(D-P_1) &=&\sigma(D)-\sigma(P_1)=D-\sigma(P_1), \\ 
\sigma(\Delta(f)+E) &=& \sigma(\Delta(f))+\sigma(E)=\Delta(f\sigma^{-1})+E. 
\end{eqnarray*}
It follows that 
$$P_1-\sigma(P_1)=\Delta(f\sigma^{-1}-f), $$ 
namely, $P_1$ and $\sigma(P_1)$ are linearly equivalent. 
This is contradicts Lemma \ref{lem:pq}. 
\end{proof}

\section{Examples}\label{sec:ex}

\subsection{Complete graphs $K_n$}



For a complete graph $K_n$, the following holds. 

\begin{lemma} \label{lemma of ranks} 
Let $K_n$ be a complete graph with $|V(K_n)|=n \ge 4$, and let $D=P_1+\cdots+P_n$, where $\{P_1, \ldots, P_n\}=V(K_n)$. 
Then the following hold:  
\begin{itemize}
\item[(a)] $r(D)=2$, 
\item[(b)] for any $P \in V(G)$, $r(D-P)=1$, 
\item[(c)] for any $P, Q \in V(G)$ (including the case $P=Q$), $r(D-P-Q)=0$. 
\end{itemize}
\end{lemma}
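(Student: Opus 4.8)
The plan is to compute the three ranks for the complete graph $K_n$ directly using the theory of $q$-reduced divisors, since rank is an intrinsically hard quantity to pin down but becomes tractable once we identify the unique reduced representative in each linear equivalence class. The central observation I would exploit is the high symmetry of $K_n$: every vertex has degree $n-1$, and for any subset $S \subseteq V \setminus \{q\}$ and any $P \in S$, we have $\mbox{outdeg}_S(P) = (n-1) - (|S|-1) = n - |S|$. This clean formula for out-degrees is what makes the reduced-divisor conditions checkable by hand. First I would fix an arbitrary vertex $q$ and, for each divisor of interest, find its $q$-reduced form, then read off whether $|D - E| \neq \emptyset$ for effective $E$ of the relevant degree.

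For part (c), I would show $r(D - P - Q) = 0$, which amounts to proving $|D-P-Q| \neq \emptyset$ (so the rank is at least $0$) and exhibiting a single effective divisor of degree $1$ whose subtraction empties the system (so the rank is at most $0$). Since $\deg(D-P-Q) = n-2 \geq 0$ and $D - P - Q$ is already effective when $P \neq Q$ (and even when $P = Q$ it has degree $n-2 \geq 2$, though one coordinate is $-1$), the nonemptiness needs the reduced-divisor machinery only in the coincident case. For the upper bound I would pick a point $R$ and show $|D - P - Q - R|$ can be made empty by choosing the reduced divisor with all mass concentrated away from the appropriate vertices; concretely, I expect the $q$-reduced form of $D-P-Q$ to be supported at or near $q$ with total degree $n-2$, and subtracting one more point should force a negative coordinate that cannot be cleared. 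I would verify this by the second reduced-divisor condition, using the $\mbox{outdeg}_S(P) = n - |S|$ formula.

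For parts (b) and (a) I would bootstrap upward: once (c) is established, $r(D-P) \geq 1$ follows because subtracting any single point still leaves a nonempty system (indeed $r(D-P-Q)=0$ for every $Q$ means exactly $|D-P-Q| \neq \emptyset$ for all $Q$, giving $r(D-P) \geq 1$), and then $r(D-P) \leq 1$ follows by exhibiting two points whose combined subtraction empties the system, reusing the coincident-point computation from (c). Similarly $r(D) \geq 2$ follows from $r(D-P) \geq 1$ for all $P$ together with $r(D-P-Q)=0$, and the upper bound $r(D) \leq 2$ comes from finding three points whose removal empties $|D|$. The hypothesis $n \geq 4$ is presumably needed precisely here, to guarantee that $\deg(D-P-Q-R) = n - 3 \geq 1 > 0$ so that the rank-zero and nonemptiness arguments have enough room and the degenerate low-$n$ cases are excluded.

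The main obstacle I anticipate is the careful handling of the coincident cases ($P = Q$, or more generally divisors with a coefficient of $-1$ at some vertex), since there the divisor is not effective and one genuinely must run the $q$-reduced algorithm rather than reading effectivity off directly; I expect to choose $q$ to coincide with the subtracted vertex (or its neighbor) so that the negative coefficient sits at the free vertex $q$, where condition (i) of the reduced definition imposes no sign constraint. The bookkeeping of which vertex to designate as $q$ in each of the several subcases, and confirming uniqueness forces emptiness, is where the argument is most error-prone, though each individual check reduces to the single out-degree identity above.
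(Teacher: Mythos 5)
Your plan is sound and would yield a correct proof, but it travels a genuinely different road from the paper's. The paper's entire lemma rests on one hand computation: it shows $|(n-2)P_1-P_2|=\emptyset$ by assuming $\Delta(f)+(n-2)P_1-P_2\geq 0$ and running a maximum-principle style case analysis on the values of $f$ (comparing $f(P_1)$, $f(P_2)$, and $\min_{k\geq 3}f(P_k)$), and it gets the lower bound $r(D)\geq 2$ from the single equivalence $D\sim nP_1$ via the chip-firing move at $P_1$; parts (b) and (c) are then read off from that one empty linear system. You instead invoke the $q$-reduced divisor machinery, using the identity $\mbox{outdeg}_S(P)=n-|S|$ on $K_n$ (which leads to the parking-function characterization of reduced divisors on complete graphs), and bootstrap from (c) up through (b) to (a). Your bootstrap logic is correct, and your method is arguably cleaner and more systematic: for instance, $(n-2)P_1-P_2$ is visibly $P_2$-reduced by your out-degree formula and has a negative coefficient at $P_2$, so its system is empty with no case analysis at all. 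What your route buys is mechanization; what it costs is an extra standard ingredient the paper never states explicitly, namely that $|D|\neq\emptyset$ if and only if the $q$-reduced representative is nonnegative at $q$ (this is in Corry--Perkinson and should be cited, since the paper only quotes existence and uniqueness of reduced divisors). Two small cautions: your guess about the role of $n\geq 4$ is off --- the statements in fact hold for $n=3$ as well (and the companion Proposition for Galois points is stated for $n\geq 3$); the hypothesis is a convenience for the paper's case analysis, not a degree-counting necessity. And in the upper-bound steps you must choose the extra subtracted vertex $R$ with care (e.g.\ $|D-P_1-P_2-P_3|\neq\emptyset$ but $|D-2P_1-P_2|=\emptyset$), which your write-up acknowledges but does not pin down; in every case the witness can be taken so that the resulting divisor is equivalent to $(n-2)P_1-P_2$, exactly the paper's key divisor.
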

\begin{proof}
\begin{enumerate}
\item [(a)]
Let 
\[
f(P)=
\begin{cases}
1\ \mbox{if $P=P_1$},\\
0\ \mbox{otherwise}. 
\end{cases}
\]
Then $\Delta(f)=(n-1)P_1-P_2-\cdots-P_n$, and we have 
\[D=P_1+\cdots+P_n\sim nP_1.\] 
Hence, we have $r(D)\geq 2$. 

Let $D'=(n-2)P_1-P_2$. We show $|D'|=\emptyset$. 
Indeed, assume the contrary: there exists $f\in M(G)$ 
such that 
\[
\Delta(f)+D'\geq 0. 
\]
\begin{enumerate}
\item[(i)] Assume that $f(P_1)\geq f(P_2)$. 
Without loss of generality, 
we may assume that 
\[
f(P_3)=\min\{f(P_k)\mid k\geq 3\}. 
\]
Then, we have 
\[
f(P_2)-f(P_k)\leq f(P_2)-f(P_3). 
\]
If $f(P_2)\leq f(P_3)$, then 
\begin{align*}
\Delta(f)&(P_2)=\sum_{i=1}^{n}(f(P_2)-f(P_i))\leq 0. 
\end{align*}
This contradicts $\Delta(f)(P_2)\geq 1$. 
Hence, we have \[f(P_2)> f(P_3), \]
and  
\begin{align*}
\Delta(f)&(P_3)=\sum_{i=1}^{n}(f(P_3)-f(P_i))\\
&\leq (f(P_3)-f(P_2))+(f(P_3)-f(P_1))< 0. 
\end{align*}
This contradicts $\Delta(f)(P_3)\geq 0$.

\item[(ii)] Assume that $f(P_1)<f(P_2)$. 
Without loss of generality, 
we may assume that 
\[
f(P_3)=\min\{f(P_k)\mid k\geq 3\}. 
\]
If $f(P_3)\leq f(P_1)$, then 
$f(P_3)<f(P_2)$. 
This contradicts $\Delta(f)(P_3)\geq 0$. 
Hence, we have \[f(P_3)>f(P_1).\] Then for all $k\geq 3$,
\[
f(P_k)\geq f(P_3)>f(P_1),
\]
and 
\[
\Delta(f)(P_1)=\sum_{i=2}^{n}(f(P_1)-f(P_i))\leq -(n-1). 
\]
This contradicts $\Delta(f)+D'\geq 0$. 
\end{enumerate}
\item [(b)]
It suffices to show that $r(D-P_1)=1$. 
This follows from 
$|(n-2)P_1-P_2)|=\emptyset$, which was proved in 
Lemma \ref{lemma of ranks} (a). 

\item [(c)]
It suffices to show that $r(D-P_{1}-P_2)=0$ and $r(D-2P_1)=0$. 
These follow from 
$|(n-2)P_1-P_2|=\emptyset$, which was proved in 
Lemma \ref{lemma of ranks} (a). 
\end{enumerate}
The proof is complete. 
\end{proof}

\begin{proposition}\label{lemma of Galois complete} 
Let $K_n$ be a complete graph with $|V(K_n)|=n \ge 3$, 
and let $D=P_1+\cdots+P_n$, where $\{P_1, \ldots, P_n\}=V(K_n)$. 
Then for any $P_i\in V(K_n)$, $P_i$ is a Galois point with 
respect to $|D|$.
\end{proposition}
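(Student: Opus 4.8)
The plan is to fix a vertex $P_i$ and verify the hypothesis $r(D)=2$ together with the three conditions of Definition \ref{Galois point}. Since $\Aut(K_n)=S_n$ acts transitively on $V(K_n)$, it is enough to treat one vertex, which I will call $P_1$; note also that $K_n$ is $2$-edge-connected for $n\geq 3$, so the definition applies. For $n\geq 4$ the three rank statements are exactly Lemma \ref{lemma of ranks}: part (a) gives $r(D)=2$, part (b) gives condition (1), namely $r(D-P_1)=1$, and part (c) gives condition (2), namely $r(D-P_1-Q)=0$ for every $Q$ (including $Q=P_1$). The remaining case $n=3$ is handled directly: $K_3$ has genus $g=1$ and the relevant degrees $3,2,1$ all satisfy $\deg\geq 2g-1$, so the Riemann--Roch theorem for graphs \cite{baker-norine} yields $r(D)=2$, $r(D-P_1)=1$, and $r(D-P_1-Q)=0$ at once.

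It remains to establish condition (3), which is the substantive part. I take $H=\langle\sigma\rangle$, where $\sigma$ is the $(n-1)$-cycle on $V(K_n)\setminus\{P_1\}$ that fixes $P_1$; this is an automorphism of $K_n$, and $|H|=n-1=\deg D-1$, as required. The vertex orbits of $H$ are $\{P_1\}$ and $\{P_2,\dots,P_n\}$, so $|V(K_n/H)|=2>1$, giving (3)(i). For harmonicity (3)(ii) I would apply Proposition \ref{prop:harm} and check that at each vertex the stabilizer acts freely on the incident edges: at $P_1$ the stabilizer is all of $H$, which permutes the $n-1$ edges $\overline{P_1P_2},\dots,\overline{P_1P_n}$ in a single $(n-1)$-cycle and hence freely, while at each $P_j$ with $j\geq 2$ the stabilizer is trivial.

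For (3)(iii) I must exhibit two distinct $H$-fixed effective divisors in $|D-P_1|$. Any $H$-fixed effective divisor is constant on the two orbits, hence of the form $aP_1+b(P_2+\cdots+P_n)$ with $a,b\geq 0$. The divisor $E_1=P_2+\cdots+P_n=D-P_1$ is one such choice. For the second, the linear equivalence $D\sim nP_1$ from Lemma \ref{lemma of ranks}(a) (whose $\Delta$-computation is valid for every $n\geq 3$) gives $D-P_1\sim(n-1)P_1$, so $E_2=(n-1)P_1$ is effective, lies in $|D-P_1|$, and is fixed by $H$; since $E_1$ and $E_2$ have disjoint supports, they are distinct. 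This completes condition (3). The one step requiring genuine care is the harmonicity verification through Proposition \ref{prop:harm}; once the orbit structure of the cyclic group $H$ is identified, everything else---both the count $|V(K_n/H)|=2$ and the production of the second fixed divisor $(n-1)P_1$---follows from information already contained in Lemma \ref{lemma of ranks}.
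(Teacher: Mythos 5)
Your proof is correct and follows essentially the same route as the paper's: the same cyclic subgroup $H=\langle\sigma\rangle$ fixing $P_1$ and permuting the remaining vertices, the same harmonicity check via Proposition \ref{prop:harm}, and the same pair of $H$-fixed divisors $E_1=D-P_1$ and $E_2=(n-1)P_1$. The one place you go beyond the paper is the separate Riemann--Roch treatment of $n=3$, which is a worthwhile addition since Lemma \ref{lemma of ranks} is stated only for $n\ge 4$ while the proposition claims $n\ge 3$.
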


\begin{proof}

We prove that $r(D)=2$ and conditions (1)--(3) in 
Definition \ref{Galois point} are satisfied for the vertex $P_1$.  
The other cases can be proved similarly. 
$r(D)=2$, conditions (1) and (2) 
are satisfied by Lemma \ref{lemma of ranks}. 
Let $\sigma \in {\rm Aut}(G)$ be an automorphism such that 
$$ \sigma(P_2)=P_3, \ldots, \sigma(P_i)=P_{i+1}, \ldots, \sigma(P_n)=P_2, $$
and 
$$ \sigma(\overline{P_iP_j})=\overline{\sigma(P_i)\sigma(P_j)} $$ 
for any $i, j$ with $2 \le i, j \le n$. 
The group of order $n-1$ generated by $\sigma$ is denoted by $H$. 
Then $H \cdot P_1=\{P_1\}$, and there does not exist another short orbit. 
Since the group $H$ acts on the set of edges with end point $P_1$ freely, 
and given Proposition \ref{prop:harm}, 
it follows that $H$ acts harmonically on $G$. 
It can be confirmed that 
\begin{align*}
\sigma(D-P_1)&=\sigma(P_2+\cdots+P_n)=P_2 + \cdots +P_n=D-P_1,\\
\sigma((n-1)P_1)&=(n-1)P_1, 
\end{align*}
for any $\sigma \in H$, and that 
condition (3) in Definition \ref{Galois point} is satisfied. 
\end{proof}
\begin{remark}
Since the genus $g$ of $K_n$ is equal to 
\[g=1-|V(K_n)|+|E(K_n)|=\frac{(n-1)(n-2)}{2}\] 
for $K_n$, we may consider $|P_1+\cdots+P_n|$ to be a complete linear system of an embedding to $\mathbb{P}^2$. 
\end{remark}
\subsection{Wheel graphs $W_n$}

For $n\geq 5$, let $W_n$ be the wheel graph with $n$ vertices: 
\[
\begin{cases}
V&=\{P_1,\ldots,P_n\},\\
E&=\{\overline{P_1P_i}\mid i\in \{2,\ldots,n\}\}\cup 
\{\overline{P_iP_{i+1}}\mid i\in \{2,\ldots,n-1\}\}
\cup\{\overline{P_nP_2}\}. 
\end{cases}
\]




\begin{figure}[h]
 \centering
\begin{tikzpicture}[scale=0.3]
\node (P1) [fill=white, draw, text=black, circle] at (0,0) {$P_1$};
\node (P2) [fill=white, draw, text=black, circle] at (5,0) {$P_2$};
\node (P3) [fill=white, draw, text=black, circle] at (0,5) {$P_3$};
\node (P4) [fill=white, draw, text=black, circle] at (-5,0) {$P_4$};
\node (P5) [fill=white, draw, text=black, circle] at (0,-5) {$P_5$};

\draw(P2)--(P3)--(P4)--(P5)--cycle;
\draw(P1)--(P2);
\draw(P1)--(P3);
\draw(P1)--(P4);
\draw(P1)--(P5);
\draw(P2)--(P5);

\end{tikzpicture}
\caption{$W_5$}\label{$W_5$}
\end{figure}

For a wheel graph $W_n$, the following holds. 

\begin{lemma} \label{lemma of ranks wheel} 
Let $W_n$ be a wheel graph with $|V(W_n)|=n \ge 5$, 
and let $D=P_1+\cdots+P_n$, where $\{P_1, \ldots, P_n\}=V(W_n)$. 
Then the following hold:  
\begin{itemize}
\item[(a)] $r(D)=2$, 
\item[(b)] for any $P\in V(G)$, $r(D-P)=1$, 
\item[(c)] for any $P,Q \in V(G)$ (including the case $P=Q$), $r(D-P-Q)=0$. 
\end{itemize}
\end{lemma}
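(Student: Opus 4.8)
The plan is to mirror the proof of Lemma~\ref{lemma of ranks} for $K_n$, replacing its symmetric computations by arguments adapted to the weaker symmetry of $W_n$ (only the rim carries a rotation, and the hub $P_1$ is distinguished). Two ingredients drive everything: the linear equivalence $D \sim nP_1$ and one explicit non-effective divisor. For the first, applying $\Delta$ to the indicator function of $P_1$ gives $\Delta(f) = (n-1)P_1 - P_2 - \cdots - P_n = nP_1 - D$, so $D \sim nP_1$ exactly as in the complete case. For the second, I would prove the key fact that $|(n-2)P_1 - P_j| = \emptyset$ for every rim vertex $P_j$ by checking that $(n-2)P_1 - P_j$ is already $P_j$-reduced: its coefficients off $P_j$ are nonnegative, and no nonempty $S \subseteq V(W_n) \setminus \{P_j\}$ can satisfy $\mbox{outdeg}_S(P) \le D'(P)$ for all $P\in S$. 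Indeed a rim vertex in $S$ (coefficient $0$) would force all its neighbours into $S$, and iterating along the path obtained by deleting $P_j$ from the rim reaches a rim-neighbour of $P_j$, which retains an edge to $P_j \notin S$; the only remaining possibility $S=\{P_1\}$ fails because $\mbox{outdeg}_S(P_1)=n-1 > n-2$. Since the reduced divisor has a negative coefficient at $P_j$, the system is empty, and the rim rotation gives the statement for all $j$.

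Granting these, (a) and (b) follow formally. For $r(D)\ge 2$ note that for distinct $Q_1\ne Q_2$ the divisor $D-Q_1-Q_2$ is already effective (every coefficient of $D$ is $1$), so the only nontrivial degree-two test is $D-2P$; for $P=P_1$ this is $(n-2)P_1\ge 0$, and for a rim vertex $P$ an explicit sequence of set-firings (borrow at $P$, then fire $P_1$ and adjust) produces an effective representative. The bound $r(D)\le 2$ comes from $D-2P_1-P_j \sim (n-2)P_1-P_j$, non-effective by the key fact; this gives (a). For (b), $r(D-P)\ge 1$ is a restatement of $r(D)\ge 2$, while $r(D-P)\le 1$ follows from $D-P_j-2P_1 \sim (n-2)P_1-P_j$ when $P=P_j$ is on the rim, and from $D-2P_1-P_j$ when $P=P_1$.

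For (c), $r(D-P-Q)\ge 0$ is immediate from $r(D)\ge 2$, so the task is to exhibit, for each pair, a vertex $R$ with $|D-P-Q-R|=\emptyset$. When at least one of $P,Q$ is the hub (including $P=Q=P_1$) the class $2P_1+P_j$ realises the key non-effective divisor, and $R$ is taken to be $P_1$ or a rim vertex accordingly. The case $P=Q$ a rim vertex, and the case of two \emph{adjacent} rim vertices, can likewise be handled by choosing $R$ to be one of the pair and verifying that the resulting degree-$(n-3)$ divisor is $q$-reduced with a negative coefficient at $q$ (one checks this directly via the outdegree condition, using that the rim deficits sit on consecutive vertices).

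The main obstacle is the remaining case of (c): two distinct, non-adjacent rim vertices $P_k,P_l$. Here the previous witnesses break down, because borrowing at either isolated deficit can be completed to an effective divisor (e.g. $D-P_k-P_l-P_k\sim 2P_k+\cdots$ becomes effective after one further firing when $P_l$ is not adjacent to $P_k$). A direct reduced-divisor computation in the smallest instance $W_5$ with $\{P_k,P_l\}=\{P_2,P_4\}$ gives $D-P_2-P_4=P_1+P_3+P_5$, and every single-vertex removal yields an effective class ($P_1+P_3+P_5-P_2\sim 2P_2$, $P_1+P_3+P_5-P_4\sim 2P_4$, the rest trivially), so that $r(D-P_2-P_4)\ge 1$ rather than $0$. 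This is precisely the step where the wheel's lack of full symmetry is decisive: it must be examined most carefully, and any correct argument must either provide a genuinely different non-effective witness for non-adjacent rim pairs or restrict the assertion of (c) to the pairs for which it actually holds.
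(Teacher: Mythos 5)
Your treatment of (a) and (b) is correct and runs essentially parallel to the paper's: everything reduces to the equivalence $D\sim nP_1$ together with $|(n-2)P_1-P_2|=\emptyset$. You certify the emptiness by observing that $(n-2)P_1-P_2$ is already $P_2$-reduced (the path $P_3,\dots,P_n$ forces any candidate set $S$ to contain a rim vertex with positive outdegree, and $S=\{P_1\}$ fails since $n-1>n-2$), whereas the paper argues directly with inequalities on a hypothetical $f$ satisfying $\Delta(f)+(n-2)P_1-P_2\ge 0$; your route is shorter and equally rigorous. For the rim case of $r(D)\ge 2$ you gesture at ``an explicit sequence of set-firings''; the paper makes this concrete via $D+\Delta(\mathbf{1}_{P_2})=4P_2+P_4+\cdots+P_{n-1}$, which you should write out, but this is routine.

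The substantive point is part (c), and the obstacle you isolated there is not a gap in your argument: assertion (c) is \emph{false} for every pair of distinct non-adjacent rim vertices, for all $n\ge 5$. Indeed $D+\Delta(\mathbf{1}_{P_j})$ has coefficient $4$ at $P_j$, coefficient $0$ at $P_1$, $P_{j-1}$, $P_{j+1}$, and coefficient $1$ elsewhere; hence for a rim vertex $P_k$ with $k\notin\{1,j-1,j,j+1\}$ the divisor $D-2P_j-P_k$ is equivalent to an effective divisor, symmetrically $|D-P_j-2P_k|\neq\emptyset$, and $D-P_j-P_k-R\ge 0$ outright for every $R\notin\{P_j,P_k\}$. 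Therefore $r(D-P_j-P_k)\ge 1$ (in fact $=1$, since it is at most $r(D-P_j)=1$); your $W_5$ computation $D-P_2-P_4\sim 3P_2$ is the case $n=5$. The paper's own proof of (c) claims it suffices to verify $r(D-P_1-P_2)=0$, $r(D-2P_1)=0$ and $r(D-2P_2)=0$; that reduction silently omits pairs of distinct rim vertices, and that is exactly where the error sits. The damage is contained: the subsequent proposition that $P_1$ is the unique Galois point of $W_n$ only invokes condition (2) of Definition \ref{Galois point} for pairs containing the candidate $P_1$, and those instances of (c) are correct. But Lemma \ref{lemma of ranks wheel}(c) as stated must be weakened, e.g.\ to pairs $P,Q$ with $P_1\in\{P,Q\}$, or $P=Q$, or $P,Q$ adjacent.
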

\begin{proof}
\begin{enumerate}
\item [(a)]
Let 
\[
f_1(P)=
\begin{cases}
1\ \mbox{if $P=P_1$},\\
0\ \mbox{otherwise}. 
\end{cases}
\]
Then $\Delta(f_1)=(n-1)P_1-P_2-\cdots-P_n$, and we have 
\[D=P_1+\cdots+P_n\sim nP_1.\] 
Let 
\[
f_2(P)=
\begin{cases}
1\ \mbox{if $P=P_2$},\\
0\ \mbox{otherwise}. 
\end{cases}
\]
Then $\Delta(f_2)=-P_1+3P_2-P_3-P_{n}$, and we have 
\[D=P_1+\cdots+P_n\sim 4P_2+P_4+\cdots+P_{n-1}.\] 
Hence, we have $r(D)\geq 2$. 

Let $D'=(n-2)P_1-P_2$. We show $|D'|=\emptyset$. 
Indeed, assume the contrary: there exists $f\in M(G)$ 
such that 
\[
\Delta(f)+D'\geq 0. 
\]
Without loss of generality, 
we may assume that 
\[
f(P_i)=\min\{f(P_k)\mid k\geq 2\}. 
\]
\begin{enumerate}
\item[(i)] 
Assume that $f(P_1)\geq f(P_i)$. 
We have for all $j\in \{1,\ldots,n\}$, 
\[
f(P_i)-f(P_j)\leq 0. 
\]
If 
\begin{align*}
\Delta&(f)(P_i)=(f(P_i)-f(P_1))\\
&+(f(P_i)-f(P_{i-1}))+(f(P_i)-f(P_{i+1}))< 0, 
\end{align*}
where indices are considered modulo $n-1$, 
then 
this contradicts $\Delta(f)(P_i)\geq 0$. 
Hence, we may assume that 
\[
f(P_1)=f(P_i)=f(P_{i-1})=f(P_{i+1}). 
\]
Using $\Delta(f)(P_{i+1})\geq 0$, 
we have $f(P_{i+1})=f(P_{i+2})$. 
Inductively, 
we obtain 
for all $k\in \{2,\ldots,n\}$, 
$f(P_i)=f(P_k)$ and 
\begin{align*}
\Delta&(f)(P_2)=(f(P_2)-f(P_1))\\
&+(f(P_2)-f(P_{n}))+(f(P_2)-f(P_{3}))=0, 
\end{align*}
which contradicts $\Delta(f)(P_2)\geq 1$. 

\item[(ii)] Assume that $f(P_1)<f(P_i)$. 
\begin{align*}
\Delta(f)(P_1)=\sum_{j=1}^n (f(P_1)-f(P_j))\leq -(n-1)< 0. 
\end{align*}
This contradicts $\Delta(f)(P_1)\geq -(n-2)$. 

\end{enumerate}

\item [(b)]
It suffices to show that $r(D-P_1)=1$ and $r(D-P_2)=1$. 
This follows from 
$|(n-2)P_1-P_2|=\emptyset$, which was proved in 
Lemma \ref{lemma of ranks wheel} (a). 

\item [(c)]
It suffices to show that $r(D-P_{1}-P_2)=0$, $r(D-2P_1)=0$, 
and $r(D-2P_2)=0$. 
The first two cases follow from 
$|(n-2)P_1-P_2|=\emptyset$, which was proved in 
Lemma \ref{lemma of ranks wheel} (a). 

For the last case, 
let $D'=(n-1)P_1-2P_2$. We show $|D'|=\emptyset$. 
Indeed, assume the contrary: there exists $f\in M(G)$ 
such that 
\[
\Delta(f)+D'\geq 0. 
\]
Without loss of generality, 
we may assume that 
\[
f(P_i)=\min\{f(P_k)\mid k\geq 2\}. 
\]
\begin{enumerate}
\item[(i)] 
Assume that $f(P_1)\geq f(P_i)$. 
We have for all $j\in \{1,\ldots,n\}$, 
\[
f(P_i)-f(P_j)\leq 0. 
\]
If 
\begin{align*}
\Delta&(f)(P_i)=(f(P_i)-f(P_1))\\
&+(f(P_i)-f(P_{i-1}))+(f(P_i)-f(P_{i+1}))< 0, 
\end{align*}
where indices are considered modulo $n-1$, then 
this contradicts $\Delta(f)(P_i)\geq 0$. 
Hence, we may assume that 
\[
f(P_1)=f(P_i)=f(P_{i-1})=f(P_{i+1}). 
\]
Using $\Delta(f)(P_{i+1})\geq 0$, 
we have $f(P_{i+1})=f(P_{i+2})$. 
Inductively, 
we obtain 
for all $k\in \{2,\ldots,n\}$, 
$f(P_i)=f(P_k)$ and 
\begin{align*}
\Delta&(f)(P_2)=(f(P_2)-f(P_1))\\
&+(f(P_2)-f(P_{n}))+(f(P_2)-f(P_{3}))=0, 
\end{align*}
which contradicts $\Delta(f)(P_2)\geq 2$. 

\item[(ii)] Assume that $f(P_1)<f(P_i)$. 
If $f(P_1)+1<f(P_i)$ or 
$f(P_j)<f(P_{j\pm 1})$ for some $j,j\pm 1\in\{2,\ldots,n\}$, 
where indices are considered modulo $n-1$, then 
\begin{align*}
\Delta(f)(P_1)=\sum_{j=1}^n (f(P_1)-f(P_j))\leq -n. 
\end{align*}
This contradicts $\Delta(f)(P_1)\geq -(n-1)$. 
Hence, we may assume that $f(P_1)+1=f(P_i)$ and 
$f(P_2)=\cdots =f(P_n)$. 
Then 
\begin{align*}
\Delta(f)&(P_2)=(f(P_2)-f(P_1))\\
&+(f(P_2)-f(P_3))+(f(P_2)-f(P_n))\leq 1. 
\end{align*}
This contradicts $\Delta(f)(P_2)\geq 2$. 
\end{enumerate}

\end{enumerate}
The proof is complete. 
\end{proof}

\begin{proposition}
Let $D=P_1+\cdots+P_n$ be a divisor on $W_n$. 
Then the following hold: 
\begin{itemize}
\item[(a)] $P_1$ is a Galois point with respect to $|D|$,  
\item[(b)] for $i\neq 1$, 
$P_i$ is not a Galois point with respect to $|D|$. 
\end{itemize} 
In particular, the number of Galois points is exactly one. 

\end{proposition}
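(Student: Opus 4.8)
The plan is to treat parts (a) and (b) separately, first noting that Lemma~\ref{lemma of ranks wheel} disposes of conditions (1) and (2) of Definition~\ref{Galois point} uniformly: since $r(D-P)=1$ and $r(D-P-Q)=0$ for all vertices $P,Q$, conditions (1) and (2) hold at \emph{every} vertex of $W_n$. Hence the entire distinction between part (a) and part (b) is forced to live in condition (3), and this is where I would concentrate the work.

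For part (a) I would mimic the proof of Proposition~\ref{lemma of Galois complete}. Set $H=\langle\sigma\rangle$, where $\sigma\in\Aut(W_n)$ is the rotation fixing $P_1$ and sending $P_2\mapsto P_3\mapsto\cdots\mapsto P_n\mapsto P_2$; this is an automorphism of order $n-1=\deg(D)-1$. Its orbits are $\{P_1\}$ and $\{P_2,\ldots,P_n\}$, so $|V(W_n/H)|=2>1$, giving (3)(i). For (3)(ii) I would invoke Proposition~\ref{prop:harm}: the stabilizer of $P_1$ is all of $H$ and permutes the $n-1$ spokes $\overline{P_1P_i}$ cyclically, hence freely, while the stabilizer of each rim vertex is trivial; therefore $H$ acts harmonically. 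For (3)(iii) I would exhibit the two distinct $H$-invariant effective divisors $E_1=P_2+\cdots+P_n=D-P_1$ and $E_2=(n-1)P_1$, the latter lying in $|D-P_1|$ because $D\sim nP_1$ as computed in Lemma~\ref{lemma of ranks wheel}(a). This shows $P_1$ is a Galois point.

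For part (b) the key observation is that the proof of Lemma~\ref{lem:D-P_1} is insensitive to the labelling of the vertex, because it relies only on $D$ being the sum of all vertices (so $\sigma(D)=D$ for every automorphism) and on Lemma~\ref{lem:pq}. Thus if some rim vertex $P_i$ were a Galois point with associated group $H$, every $\sigma\in H$ would have to fix $P_i$, for otherwise $P_i\sim\sigma(P_i)$ would contradict Lemma~\ref{lem:pq}. Consequently $H$ would be a subgroup of the stabilizer $\Aut(W_n)_{P_i}$ of order $|H|=n-1$. I would then compute $\Aut(W_n)$: for $n\ge 5$ the hub $P_1$ is the unique vertex of degree $n-1>3$, so every automorphism fixes $P_1$ and restricts to an automorphism of the rim cycle $C_{n-1}$, while each such automorphism conversely extends; hence $\Aut(W_n)\cong D_{n-1}$, of order $2(n-1)$, acting transitively on the $n-1$ rim vertices. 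By orbit–stabilizer this forces $|\Aut(W_n)_{P_i}|=2$ for each $i\ge 2$, so $n-1=|H|\le 2$, contradicting $n\ge 5$. Therefore no rim vertex is a Galois point, and together with part (a) the number of Galois points is exactly one.

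The main obstacle I anticipate is part (b), specifically the identification $\Aut(W_n)\cong D_{n-1}$ and the resulting stabilizer order; once the reduction ``a Galois group must fix its base point'' is available from Lemmas~\ref{lem:pq} and~\ref{lem:D-P_1}, the remaining content is just the order comparison $n-1>2$. I would be careful to note that the degree inequality $n-1>3$, and hence the hypothesis $n\ge 5$, is exactly what forces every automorphism to fix the hub and thereby constrains $\Aut(W_n)$ to act only on the rim.
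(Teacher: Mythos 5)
Your part (a) follows the paper's proof essentially verbatim: the same rotation $\sigma$, the same group $H$ of order $n-1$, the same appeal to Proposition~\ref{prop:harm}, and the same pair of invariant divisors $E_1=D-P_1$ and $E_2=(n-1)P_1$.

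For part (b) you take a genuinely different, and correct, route. The paper stays inside the divisor theory: since $P_1$ is the unique vertex of degree $n-1\ge 4$, any associated group $H$ fixes the hub, and by Proposition~\ref{prop:harm} its order forces it to act transitively on the rim; an $H$-invariant effective divisor in $|D-P_2|$ whose support meets the rim must then contain the entire rim orbit, so $E_1=P_2+\cdots+P_n=D-P_1$, giving $D-P_2\sim D-P_1$, hence $P_2\sim P_1$, contradicting Lemma~\ref{lem:pq}. You instead observe, correctly, that the proof of Lemma~\ref{lem:D-P_1} is insensitive to which vertex plays the role of $P_1$ (it only uses $\sigma(D)=D$ and the existence of an $H$-invariant divisor in $|D-P_i|$), so $H$ must fix $P_i$; you then determine $\Aut(W_n)\cong D_{n-1}$ and conclude that the stabilizer of a rim vertex has order $2<n-1$. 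Both arguments are sound. Yours costs an extra (elementary) computation of the full automorphism group but never touches the invariant divisors $E_1,E_2$, reducing the obstruction to a pure order count; the paper's argument needs only the weaker fact that every automorphism fixes the hub, and instead exploits the linear-equivalence contradiction via Lemma~\ref{lem:pq}. One small point: in your harmonicity check for part (a) you verify freeness of $H_{P_1}$ only on the spokes, whereas Proposition~\ref{prop:harm} asks for freeness on all edges of the induced neighborhood of $P_1$, which here includes the rim edges; this is harmless since the rotations also permute the rim edges freely, and the paper is equally terse on this point.
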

\begin{proof}
We prove that $r(D)=2$ and conditions (1)--(3) 
in Definition \ref{Galois point} are satisfied for the vertex $P_1$.  
$r(D)=2$, conditions (1) and (2) are satisfied 
by Lemma \ref{lemma of ranks wheel}. 
Let $\sigma \in {\rm Aut}(G)$ be an automorphism such that 
$$ \sigma(P_2)=P_3, \ldots, \sigma(P_i)=P_{i+1}, \ldots, \sigma(P_n)=P_2, $$
and 
$$ \sigma(\overline{P_iP_{i+1}})=\overline{\sigma(P_i)\sigma(P_{i+1})} $$ 
for any $i$ with $2 \le i\le {n-1}$, and  
\[
\sigma(\overline{P_{n}P_{2}})=\overline{\sigma(P_{n})\sigma(P_{2})}, 
\]
and 
$$ \sigma(\overline{P_1P_{i}})=\overline{\sigma(P_1)\sigma(P_{i})} $$ 
for any $i$ with $2 \le i\le {n}$. 
The group of order $n-1$ generated by $\sigma$ is denoted by $H$. 
Then $H \cdot P_1=\{P_1\}$, and there does not exist another short orbit. 
Since the group $H$ acts on the set of edges with end point $P_1$ freely, 
and given Proposition \ref{prop:harm}, 
it follows that $H$ acts harmonically on $G$. 
It can be confirmed that 
\begin{align*}
\sigma(D-P_1)&=\sigma(P_2+\cdots+P_n)=P_2 + \cdots +P_n=D-P_1,\\
\sigma((n-1)P_1)&=(n-1)P_1, 
\end{align*}
for any $\sigma \in H$, and that 
condition (3) in Definition \ref{Galois point} is satisfied.


Assume that $P_i$ is a Galois point for some $i$ with $2 \le i \le n$. 
We can assume that $i=2$. 
Let $H$ be an associated group, and 
let $E_1, E_2 \in |D-P|$ be effective divisors, 
as in Definition \ref{Galois point} (3). 
Then 
\[{\rm supp}(E_1) \cap \{P_2, \ldots, P_n\} \ne \emptyset 
\mbox{ or }
{\rm supp}(E_2) \cap \{P_2, \ldots, P_n\} \ne \emptyset. 
\]
We can assume that 
\[{\rm supp}(E_1) \cap \{P_2, \ldots, P_n\} \ne \emptyset. 
\]
Let $P_i \in {\rm supp}(E_1) \cap \{P_2, \ldots, P_n\}$. 
Since $P_1$ is a unique vertex of degree $n-1 \ge 4$, it follows that $H$ fixes $P_1$. 
By Proposition \ref{prop:harm}, the orbit $H \cdot P_i$ coincides with the set $\{P_2, \ldots, P_n\}$. 
Since $H$ acts on $E_1$, it follows that ${\rm supp}(E_1) \supset H \cdot P_i$, namely, $E_1=P_2+\cdots+P_n=D-P_1$.   
Then, $D-P_2$ and $D-P_1$ are linearly equivalent. 
This implies that $P_2 \sim P_1$. 
This contradicts Lemma \ref{lem:pq}.

The proof is complete. 
\end{proof}

\section{Proof of Theorem \ref{graph}}\label{sec:proof}

\begin{proof}[Proof of Theorem \ref{graph}] 
Assume that condition (1) is satisfied, that is, $G=K_n$. 
According to Lemma \ref{lemma of ranks}, it follows that $r(D)=2$, and 
conditions (1) and (2) are satisfied for any vertex $P \in V(G)$. 
Furthermore, by Proposition \ref{lemma of Galois complete}, 
it follows that $P$ is a Galois point with respect to 
$|D|$ for any $P \in V(G)$. 


Assume that condition (2) is satisfied, namely, $P_1$ and $P_2$ are Galois points with respect to $|D|$. 
Let $H_1, H_2$ be the groups of order $n-1$ associated with $P_1, P_2$, respectively. 
By Lemma \ref{lem:D-P_1}, we have 
$\sigma(P_1)=P_1$ and $\sigma(D-P_1)=D-P_1$ for any $\sigma \in H_1$, 
namely, $H_1$ is a stabilizer subgroup of $P_1$. 
Since $G$ is connected and simple, and $H_1$ acts on the set of edges with end point $P_1$ freely, it follows that $P_2, \ldots, P_n$ are connected to $P_1$. 
This implies that the group $H_1$ acts on the set $V(G)\setminus\{P_1\}$ transitively. 
Similarly, the group $H_2$ fixes $P_2$ and acts on the set $V(G) \setminus \{P_2\}$ transitively. 
Then, the action of the group $\langle H_1, H_2 \rangle$ on $V(G)$ is doubly transitive. 
This implies that $P_i$ is connected to $P_j$ for any $i, j$ with $i \ne j$, namely, that $G=K_n$. 
\end{proof} 

Focusing on the number of Galois points, we have the following. 

\begin{corollary} 
Let $G$ be a 2-edge-connected graph with $n=|V(G)|$, and let $D:=P_1+\cdots+P_n$. 
Assume that $r(D)=2$. 
Then the number of Galois points with respect to $|D|$ is $0$, $1$, or $n$. 
Furthermore, the number is equal to $n$ if and only if $G=K_n$.  
\end{corollary}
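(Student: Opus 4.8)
The plan is to prove the corollary as a direct consequence of Theorem~\ref{graph}, treating the trichotomy on the number of Galois points and the characterization of the extreme case separately. First I would observe that the ``$n$ if and only if $G=K_n$'' claim follows almost immediately from the theorem: if $G=K_n$, then Proposition~\ref{lemma of Galois complete} guarantees that all $n$ vertices are Galois points; conversely, if all $n$ vertices are Galois points, then in particular at least two distinct vertices $P_i,P_j$ are Galois points (using $n\geq 2$, which holds since $r(D)=2$ forces $G$ to have enough vertices), so condition~(2) of Theorem~\ref{graph} is satisfied and hence $G=K_n$. The final assertion of Theorem~\ref{graph} then confirms that in this case all $n$ vertices are indeed Galois points, so the count is exactly $n$.

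The heart of the corollary is the trichotomy, and here the key step is to rule out any count strictly between $1$ and $n$. I would argue by contraposition on the contrapositive of Theorem~\ref{graph}: suppose the number of Galois points is at least $2$. Then there exist distinct Galois points $P_i,P_j$, so condition~(2) holds, and by the theorem $G=K_n$. But once $G=K_n$, the final clause of Theorem~\ref{graph} (or Proposition~\ref{lemma of Galois complete} directly) forces \emph{all} $n$ vertices to be Galois points. Hence the number of Galois points cannot be any value $k$ with $2\leq k\leq n-1$; it must be either $0$, $1$, or $n$. Combining this with the trivial observation that the count is a nonnegative integer bounded by $n$ completes the trichotomy.

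The main obstacle, such as it is, is mostly bookkeeping rather than mathematical depth: I must be careful to phrase the ``two distinct Galois points'' hypothesis so that it matches condition~(2) of Theorem~\ref{graph} exactly, which requires indices $i\neq j$ with both $P_i$ and $P_j$ Galois, together with the standing assumption $r(D)=2$ that is already imposed in the corollary's hypotheses. I would also note that the corollary inherits the $2$-edge-connectedness assumption and the specific divisor $D=P_1+\cdots+P_n$ from its statement, so the application of Theorem~\ref{graph} is immediate with no additional verification. The only genuine content is the logical pinching argument showing that the presence of two Galois points cascades to all $n$, which is exactly what the ``in this case'' clause of the theorem provides; I do not anticipate any delicate estimate or construction beyond invoking the theorem and its concluding sentence.
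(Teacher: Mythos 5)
Your proposal is correct and matches the paper's intent exactly: the paper offers no separate proof of the corollary, presenting it as an immediate consequence of Theorem~\ref{graph}, and your argument (two Galois points force $G=K_n$, which in turn forces all $n$ vertices to be Galois points, pinching out any count between $2$ and $n-1$) is precisely that deduction. No gaps.
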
 

\begin{remark}
Let $D=P_1+\cdots+P_n$. 
There exist examples of graphs admitting no 
Galois points with respect to $|D|$. 
It can be confirmed that $r(D)=2$ and there does not exist a Galois point with respect to $|D|$ for the graph $G=(V,E)$: 
\[
\begin{cases}
V=\{P_1,P_2,P_3,P_4\},\\
E=\{\overline{P_1P_2},\overline{P_2P_3},\overline{P_3P_4},\overline{P_4P_1},\overline{P_1P_3}\}. 
\end{cases}
\]
\begin{figure}[h]
 \centering
\begin{tikzpicture}[scale=0.3]
\node (P1) [fill=white, draw, text=black, circle] at (5,0) {$P_1$};
\node (P2) [fill=white, draw, text=black, circle] at (0,5) {$P_2$};
\node (P3) [fill=white, draw, text=black, circle] at (-5,0) {$P_3$};
\node (P4) [fill=white, draw, text=black, circle] at (0,-5) {$P_4$};

\draw(P1)--(P2)--(P3)--(P4)--cycle;
\draw(P1)--(P3);
\draw(P1)--(P4);

\end{tikzpicture}
\end{figure}

We recall the Riemann--Roch theorem for graphs \cite{baker-norine}: 
\[
r(D)-r(K-D)=\deg(D)+1-g, 
\]
where $g=1-|V(G)|+|E(G)|$ and 
\[
K=\sum_{P\in V(G)}(\deg(P)-2)P. 
\]
Then we have 
\[
r(D)-r(-P_2-P_4)=4+1-2 \Leftrightarrow r(D)=2. 
\]

On the other hand, for any $i\in \{1,\ldots,4\}$, 
$P_i$ is not a Galois point. 
We only show the case $i=1$ and the other cases can 
be proved similarly. 
Indeed, if not, then 
there exists a subgroup $H\subset \Aut(G)$ of order $|H|=3$ 
such that $H_{P_1}$ acts freely on $V(G)\setminus \{P_1\}$ 
by Proposition \ref{prop:harm} and Lemma \ref{lem:D-P_1}. 
This contradicts $2=\deg(P_2)\neq \deg(P_3)=3$.
\end{remark}

\section{Concluding remarks}\label{sec:rem}

\begin{remark}
For any graph $G$ and any vertex $P \in V(G)$ of degree $k$, there exists a natural harmonic morphism $\varphi_P$ of degree $k$ from $G$ to a tree (see \cite[Example 3.2]{baker-norine2}). 
Then we can define an {\it intrinsic Galois point} $P$ (tentatively) as follows: there exists a subgroup $H \subset {\rm Aut}(G)$ of order $k$ such that $H$ acts harmonically on $G$ and $\varphi_P \circ \sigma=\varphi_P$ for any $\sigma \in H$. 
This notion may be close to that for a ``Galois-Weierstrass point,'' introduced by Morrison and Pinkham (see \cite{komeda-takahashi}). 
Conditions (1) and (2) in Theorem \ref{graph} are equivalent to the following condition. 
\begin{itemize}
\item[(3)] There exist different vertices $P, Q \in V(G)$ of degree $n-1$ such that $P, Q$ are intrinsic Galois points. 
\end{itemize}
\end{remark}

\begin{remark}
In algebraic geometry, the divisor $C.L$ coming from the intersection $C \cap L$ of a smooth plane curve $C \subset \mathbb{P}^2$ and a projective line $L \subset \mathbb{P}^2$ is a typical example of a divisor $D$ for which $r(D)=2$ (for example, see \cite[Theorem 1.4.9]{namba}). 
Assume that the characteristic of the ground field is zero. 
For smooth plane curves, the number of Galois points contained in $C$ is $0$, $1$, or $4$ (see \cite{yoshihara}). 
In the last case, $\deg (C)=4$, and Galois points $P_1, P_2, P_3$, and $P_4$ are contained in some line $L$. 
The divisor $C.L$ arising from $C \cap L$ coincides with $P_1+P_2+P_3+P_4$.  
\end{remark}

\section*{Acknowledgments}
The authors were supported 
by JSPS KAKENHI (22K03223, 22K03277).


\begin{thebibliography}{20} 
\bibitem{baker-norine} M.~Baker and S.~Norine, Riemann--Roch and Abel--Jacobi theory on a finite graph, {\sl Adv.~Math.} {\bf 215} (2007), 766--788. 

\bibitem{baker-norine2} M.~Baker and S.~Norine, Harmonic morphisms and hyperelliptic graphs, {\sl IMRN} {\bf 15} (2009), 2914--2955. 






\bibitem{corry} S.~Corry, Genus bounds for harmonic group actions on finite graphs, {\sl IMRN} {\bf 19} (2011), 4515--4533. 

\bibitem{corry2} S.~Corry, 
Harmonic Galois theory for finite graphs, 
Adv.~Stud.~Pure Math.~{\bf 63} (2012), 121--140. 

\bibitem{corry-perkinson} 
S.~Corry and D.~Perkinson, 
{\sl Divisors and sandpiles. An introduction to chip-firing.} 
American Mathematical Society, Providence, RI, 2018.

\bibitem{fukasawa} S.~Fukasawa, Galois points for a plane curve in arbitrary characteristic, {\sl Geom.~Dedicata} {\bf 139} (2009), 211--218.  

\bibitem{hartshorne} R.~Hartshorne, {\sl Algebraic Geometry}, 
Graduate Texts in Mathematics {\bf 52}, Springer-Verlag, New York, 1977. 

\bibitem{komeda-takahashi} 
J.~Komeda and T.~Takahashi, 
Relating Galois points to weak 
Galois Weierstrass points through double coverings of curves, 
{\sl J.~Korean Math.~Soc.} {\bf 54} (2017), 69--86. 

\bibitem{miura-yoshihara} K.~Miura and H.~Yoshihara, 
Field theory for function fields of plane quartic curves, 
{\sl J.~Algebra} {\bf 226} (2000), 283--294. 

\bibitem{namba} 
M.~Namba, 
{\sl Geometry of Projective Algebraic Curves}, 
Marcel Dekker, New York, 1984. 

\bibitem{stichtenoth} H.~Stichtenoth, {\sl Algebraic Function Fields and Codes}, Graduate Texts in Mathematics {\bf 254}, Springer-Verlag, Berlin Heidelberg, 2009.  


\bibitem{urakawa1} 
H.~Urakawa, 
A Discrete Analogue of the Harmonic Morphism. 
In Harmonic Morphisms, Harmonic Maps, and Related Topics, 97--108. 
Chapman \& Hall/CRC Research Notes in Mathematics 413. 
Boca Raton, FL: Chapman \& Hall/CRC, 2000. 

\bibitem{urakawa2} 
H.~Urakawa, 
A discrete analogue of the harmonic morphism and 
Green kernel comparison theorems. 
{\sl Glasg.~Math.~J.} {\bf 42}, no.~3 (2000), 319--334.

\bibitem{yoshihara} H.~Yoshihara, 
Function field theory of plane curves by dual curves, 
{\sl J.~Algebra} {\bf 239} (2001), 340--355. 

\bibitem{open} H.~Yoshihara and S.~Fukasawa, 
List of problems, 
\url{https://sites.google.com/sci.kj.yamagata-u.ac.jp/fukasawa-lab/open-questions-english}
\end{thebibliography}
\end{document}